\let\oldref\ref
\renewcommand{\ref}[1]{(\oldref{#1})}
\newcommand{\N}{\mathbb{N}}
\newcommand{\Z}{\mathbb{Z}}
\newcommand{\C}{\mathbb{C}}
\newcommand{\R}{\mathbb{R}}
\newcommand{\Q}{\mathbb{Q}}
\renewcommand{\L}{\mathbb{L}}
\newcommand{\eps}{\varepsilon}
\def\XXint#1#2#3{{\setbox0=\hbox{$#1{#2#3}{\int}$ }
\vcenter{\hbox{$#2#3$ }}\kern-.6\wd0}}
\renewcommand{\O}{\mathcal{O}}
\renewcommand{\cal}{\mathcal}
\newcommand{\bb}{\mathbb}
\renewcommand{\bf}{\mathbf}
\renewcommand{\frak}{\mathfrak}
\newtheorem{theorem}{Theorem}[section]
\newtheorem{corollary}[theorem]{Corollary}
\theoremstyle{definition}
\newtheorem{definition}[theorem]{Definition}
\newtheorem{example}[theorem]{Example}
\newtheorem{problem}[theorem]{Problem}
\theoremstyle{remark}
\newtheorem{remark}[theorem]{Remark}
\numberwithin{equation}{section}
\title{Sparse mean value estimates, algebraic number solution counting, and non-Archimedean Fourier analysis}
\author{Ben Johnsrude}
\begin{document}

	\maketitle
	
	\begin{abstract}
		We demonstrate two applications of Fourier decoupling theorems over non-Archimedean local fields to real-variable problems. These include short mean value estimates for exponential sums, canonical-scale mean value estimates for exponential sums arising from phase functions with coefficients arising from the traces of powers of algebraic numbers, and solution counting bounds for Vinogradov systems whose indeterminates are families of algebraic numbers. We also record an example where real and $\frak p$-adic decoupling estimates differ.
	\end{abstract}

    \section{Introduction}

    In this note, we record two consequences of transferring discrete restriction-type estimates from non-Archimedean local fields to the real numbers. In recent years, the techniques of Fourier-analytic decoupling have been found to transfer easily to $\frak p$-adic and other non-Archimedean local fields. Moreover, those techniques are typically at their most potent in the non-Archimedean setting, where many approximate statements from the reals become literal, e.g. statements regarding essential support on frequency/physical space. On the other hand, there have been limited occasions for the results of non-Archimedean Fourier analysis to imply results over the real numbers, save for the (very important) exception of Diophantine equation solution counting.

     In Section \ref{sec:padic}, we will restrict attention to the $\frak p$-adics and convert short mean value estimates over $\Q_\frak p$ to mean value estimates over $\R$ using the same phase, whose domain is now taken to be a subset of $[0,1]^k$ that is ``$\frak p$-adically shaped'' in a suitable sense. We briefly record some of the state of knowledge for short mean value estimates over the reals. \cite{DGW} conjectured that, for each $n\geq 2,0\leq\beta\leq n-1,$ and $s\geq 1$, the exponential sum associated to the moment curve phase function has short mean value estimate
     \begin{equation}\label{conj:mom_small}
         \int_{[0,1]^{n-1}\times[0,N^{-\beta}]}\left|\sum_{k=1}^Ne(kx_1+k^2x_2+\ldots+k^nx_n)\right|^{2s}dx\lesssim_\eps N^\eps(N^{s-\beta}+N^{2s-\frac{n(n+1)}{2}}),
     \end{equation}
     for every $\eps>0$. Here and elsewhere we write $e(r)=e^{2\pi ir}$ for real numbers $r$. The modifier ``short'' for mean value estimates refers to the shape of the domain $[0,1]^{n-1}\times[0,N^{-\beta}]$, which is shorter than the fundamental domain $[0,1]^n$. The same paper validated the estimate \eqref{conj:mom_small} in the cases $n=2$, and $n=3$ in the restricted range $0\leq\beta\leq\frac{3}{2}$. \cite{guth2022small} resolved the $n=3$ case for the full $0\leq\beta\leq 2$ range. \cite{maldague2024small} proved a sharp estimate in the additional range $2<\beta$, with the modified inequality
     \begin{equation*}
        \begin{split}
             \int_{[0,1]^{n-1}\times[0,N^{-\beta}]}&\left|\sum_{k=1}^Na_ke(kx_1+k^2x_2+\ldots+k^nx_n)\right|^{2s}dx\\
             &\leq C_\eps N^\eps\sum_k|a_k|^{2s}\times\begin{cases}
                 N^{\frac{1}{2}-\frac{\beta+1}{p}}+N^{1-\frac{7}{p}} & \text{if }\,\,\,\,0\leq\beta<\frac{5}{2},  \\
                 N^{\frac{1}{2}-\frac{\beta+1}{p}}+N^{\frac{\beta}{3}-\frac{7\beta}{3p}}+N^{1-\frac{7}{p}} & \text{if }\,\,\,\,\frac{5}{2}\leq\beta<3,  \\
                 N^{\frac{1}{2}-\frac{\beta}{p}}+N^{1-\frac{3+\beta}{p}} & \text{if }\,\,\,\,3\leq\beta  ,
             \end{cases}
         \end{split}
     \end{equation*}
     valid for any $\eps>0$ and any complex coefficients $\{a_k\}_{k=1}^N$.

     In a different direction, \cite{DLexpsum} conjectured the estimate
     \begin{equation}\label{conj:DL}
         2^{j\frac{d+1}{2}}\int_{[0,2^{-j}]^d}\left|\sum_{k=1}^Na_ke(kx_1+k^2x_2+\ldots+k^nx_n)\right|^{p}dx\leq C_\eps N^\eps(1+N^{\frac{p-\rho_d}{2}})\Big(\sum_k|a_k|^2\Big)^{p/2},
     \end{equation}
     where
     \begin{equation*}
         \rho_d=\begin{cases}
             \frac{3d^2-4}{4} & d\text{ even},\\
             \frac{3d^2-3}{4} & d\text{ odd}.
         \end{cases}
     \end{equation*}
     The same paper validated \eqref{conj:DL} when $d=2$ or $d=3$, and in the restricted range $p\leq 10$ for $d=4$.
     
     Similar short mean value estimates have been studied for other phase functions, e.g. the sphere, cone, and paraboloid. We discuss the latter case below.

     Our ``$\frak p$-adically shaped'' short mean value estimates are over domains that are short in a $\frak p$-adic sense, i.e. which are sparsely separated. The general theme is to compare the ``Archimedean interval''
     \begin{equation*}
         \{0,1,2,\ldots,\frak p^k-1\}\subseteq\{0,1,2,\ldots, \frak p^N-1\}
     \end{equation*}
     with the ``non-Archimedean interval''
      \begin{equation*}
          \{0,\frak p^{N-k},2\frak p^{N-k},\ldots,\frak p^N-\frak p^k\}\subseteq\{0,1,2,\ldots, \frak p^N-1\}.
      \end{equation*}
      Short mean value estimates for $\frak p$-adic exponential sums can be naturally interpreted as mean value estimates for real exponential sums over sparse domains; the precise statement we obtain is Theorem \ref{thm:transfer} below.
     
     In Section \ref{sec:numfield}, we will move to non-Archimedean completions of number fields and conclude both mean value estimates and solution-counting estimates from the decoupling theorem for the moment curve over non-Archimedean local fields. In the case of mean value estimates, we conclude bounds for real integrals with polynomial phases incorporating coefficients arising from the traces $\mathrm{Tr}_{\Q(\alpha)/\Q}(\alpha^\kappa)$ for various algebraic numbers $\alpha$ and nonnegative integers $\kappa$. The precise statement is Theorem \ref{thm:mv_nf}. In the case of solution-counting, the indeterminates are generalized from integers to algebraic numbers. The precise statement is Corollary \ref{cor:alg_sol}. We are not aware of these results appearing elsewhere in the literature. In both cases, our results may arise from simply considering integrals over fundamental domains and using even integer exponents and the sum-product expansion.

     In Section \ref{section:obstruction}, we record an example where the $\frak p$-adic decoupling theory differs significantly from the real decoupling theory, preventing a matching estimate from holding between two types of real mean value integrals. We raise the question of whether the two types of mean value integrals can be shown to possess different growth rates for special choices of polynomial phases.
	
	\section{Mean values on sparse subsets of \texorpdfstring{$[0,1]^k$}{[0,1]k}}\label{sec:padic}

    In this section, we establish a transference principle between short mean value estimates over $\frak{p}$-adic fields and the reals, for phases given by a vector of homogeneous polynomials with integer coefficients. Because the domains of the integrals are smaller than the fundamental domain, we do not quite have access to a clean sum-product expansion. One approach would be to appeal to a more general sum-product quantity involving near-solutions to the associated Diophantine equations; we instead convert directly between the values of the integrands.
    
    Let $\frak{p}\in\N$ be an arbitrary prime. Write $\chi_\frak{p}$ for the standard $\frak{p}$-adic character, i.e.
    \begin{equation*}
        \chi_\frak{p}\left(\sum_{n=a}^\infty c_n\frak{p}^n\right)=e\left(\sum_{n=a}^{-1}c_n\frak{p}^n\right),
    \end{equation*}
    for each $a\in\Z$ and coefficients $c_n\in\{0,\ldots,\frak{p}-1\}$. Note in particular that $\chi_\frak{p}(q)=e(q)$ for any $q\in\Q$.
    
    Let $N=\frak{p}^K$ be a scale parameter coinciding with a power of $\frak{p}$. Let $\Omega\subseteq\Z^d$ be finite, usually depending on $N$. Let $\bb P:\R^d\to\R^k$ be a polynomial whose entries are homogeneous polynomials with rational coefficients. We write $\bb{P}=(\bb P_1,\ldots,\bb P_k)$ and $\bb P_j(\xi)=\sum_\ell c_j^\ell\xi^{e_j^\ell}$ for $\xi\in\R^d$ and $e_j^\ell=(e_{j1}^\ell,\ldots,e_{jd}^\ell)$ multiindices and $c_j^\ell\in\Z$. We write $|e_j^\ell|=\sum_i|e_{ji}^\ell|$; note that homogeneity implies that $|e_j^\ell|$ is independent of $\ell$, so we will frequently suppress that index as $|e_j^\ell|=|e_j|$.

    Finally, let $2\leq r<\infty$ and $\sigma=(\sigma_1,\ldots,\sigma_k)\in\R_{\geq 0}^k$ be a vector of nonnegative reals, serving as localization parameters. For simplicity, we assume that $\sigma_jK\in\Z$ for every $j$.
    \begin{definition}
        The $\frak{p}$-adic discrete restriction constant at exponent $r$, scale $N$, and localization $\sigma$ for the phase $\bb P$ over $\Omega$ is the optimal constant $D_r^\frak{p}(N,\sigma;\bb P,\Omega)$ validating the inequality
        \begin{equation*}
            N^{\sum_j\sigma_j}\int_{B_{\Q_p}(0,N^{-\sigma_1})\times\cdots\times B_{\Q_p}(0,N^{-\sigma_k})}\left|\sum_{\bf n\in\Omega}a_{\bf n}\chi_p\Big(\bf x\cdot\bb P\Big(\frac{\bf n}{N}\Big)\Big)\right|^rd\bf x\leq D_r^\frak p(N,\sigma;\bb P,\Omega)\sum_{\bf n\in\Omega}|a_{\bf n}|^r,
        \end{equation*}
    uniformly in $\{a_\bf n\}_{\bf n\in\Omega}$ choices of complex coefficients.
    \end{definition}

    Bounds on $D_r^\frak p(N,\sigma;\bb P,\Omega)$ imply bounds on mean values of exponential sums on special subdomains of $[0,1]^k$. We establish a shorthand for those subsets.
    \begin{definition}\label{def:subdoms}
        Given the datum $(\frak p,N,\bb P,\sigma)$, we define
        \begin{equation*}
            A_\frak p^{(N,\sigma;\bb P)}=\bigcup_{\substack{(\iota_1,\ldots,\iota_k)\\0\leq\iota_j<N^{|e_j|-\sigma_j}}}\Big(\iota_1N^{\sigma_1-|e_1|},\ldots,\iota_kN^{\sigma_k-|e_k|}\Big)+\prod_{j=1}^k\Big[-\frac{1}{2}N^{-|e_j|},\frac{1}{2}N^{-|e_j|}\Big]+\Z^k,
        \end{equation*}
        the \emph{sparse subdomain of $\R^k/\Z^k$ for $\bb P$ with $\frak p$-adic localization $(N,\sigma)$}.
    \end{definition}
    Here the sum of sets is the Minkowski sum. Note that $A_\frak p^{(N,\sigma;\bb P)}$ depends on $\bb P$ only through the homogeneity parameters $|e_1|,\ldots,|e_k|$. Note too that $|A_\frak p^{(N,\sigma;\bb P)}|=N^{-\sum_j\sigma_j}$. The sets are illustrated as the (b) domains in Figures \ref{fig:fig1}, \ref{fig:fig2}, and \ref{fig:fig3} below.
    
    \begin{definition}
        The real discrete restriction constant at exponent $r$, scale $N\in\frak p^\N$, and localization $\sigma$ for the phase $\bb P$ over $\Omega$ is the optimal constant $D_r^\infty(N,\sigma;\bb P,\Omega)$ validating the inequality
        \begin{equation*}
            N^{\sum_j\sigma_j}\int_{A_\frak p^{(N,\sigma;\bb P)}}\left|\sum_{\bf n\in\Omega}a_{\bf n}e\Big(\bf x\cdot\bb P(\bf n)\Big)\right|^rd\bf x\leq D_r^\infty(N,\sigma;\bb P,\Omega)\sum_{\bf n\in\Omega}|a_{\bf n}|^r,
        \end{equation*}
    uniformly in $\{a_\bf n\}_{\bf n\in\Omega}$ choices of complex coefficients.
    \end{definition}
    \begin{remark}
        It is most natural to take $\Omega=[0,N]^d$; this will in particular lead to an approximate matching between the two bounds \eqref{bd:abyn} and \eqref{bd:nbya} below. We leave the statement in its more general form, however.
    \end{remark}

    Our first result is the following. The proof is primarily a change-of-variables.
    \begin{theorem}[Transference of mean value estimates]\label{thm:transfer}
        Let $\frak p$, $\bb P$, $N$, $r$, and $\Omega$ be as above. Suppose $\sigma_j\leq|e_j|$ for all $j$. Then we have the bound
        \begin{equation}\label{bd:abyn}
            D_r^\infty(N,\sigma;\bb P,\Omega)\leq D_r^\frak p(N,\sigma;\bb P,\Omega).
        \end{equation}
        If $\epsilon_j=\max(1,\max_{\bf n\in\Omega}|\sum_\ell c_j^\ell(\bf n/N)^{e_j^\ell}|)^{-1}$, then
        \begin{equation}\label{bd:nbya}
            D_r^\frak p(N,\sigma;\bb P,\Omega)\leq \frac{2^{(r+1)k}}{\prod_{j=1}^{k}\epsilon_{j}}D_r^\infty(N,\sigma;\bb P,\Omega).
        \end{equation}
    \end{theorem}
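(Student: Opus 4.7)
The plan is to express both the $\frak p$-adic and real integrals in a common discrete form indexed by the centers $\bf c_\iota := (\iota_j N^{\sigma_j - |e_j|})_{j=1}^{k}$. For the $\frak p$-adic side, I decompose each ball $B_{\Q_{\frak p}}(0, N^{-\sigma_j}) = \frak p^{K\sigma_j}\Z_{\frak p}$ (where $N = \frak p^K$) as the disjoint union of cells $\iota_j \frak p^{K\sigma_j} + \frak p^{K|e_j|}\Z_{\frak p}$ indexed by $\iota_j \in \{0, \ldots, N^{|e_j|-\sigma_j}-1\}$. Writing $x_j$ in such a cell and invoking homogeneity $\bb P_j(\bf n/N) = N^{-|e_j|}\bb P_j(\bf n)$ splits the phase as $\iota_j N^{\sigma_j - |e_j|}\bb P_j(\bf n) + (\text{element of }\Z_{\frak p})$. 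Since $\chi_{\frak p}$ is trivial on $\Z_{\frak p}$ and coincides with $e(\cdot)$ on $\Q$, the integrand is constant on each cell and equal to $|S(\bf c_\iota)|^r$, where $S(\bf y) := \sum_{\bf n} a_{\bf n}\, e(\bf y \cdot \bb P(\bf n))$. Hence
\begin{equation*}
    \int_{\prod_j B_{\Q_{\frak p}}(0, N^{-\sigma_j})}\Big|\sum_{\bf n} a_{\bf n}\, \chi_{\frak p}\big(\bf x \cdot \bb P(\bf n/N)\big)\Big|^r d\bf x \;=\; \prod_j N^{-|e_j|} \sum_{\iota} |S(\bf c_\iota)|^r. \qquad (\ast)
\end{equation*}

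For \eqref{bd:abyn}, I split the real integral into its disjoint cubes and shift each via $\bf y = \bf c_\iota + \bf t$ with $\bf t \in \prod_j [-N^{-|e_j|}/2, N^{-|e_j|}/2]$. The factor $e(\bf t \cdot \bb P(\bf n))$ is absorbed into modulated coefficients $\tilde a_{\bf n}(\bf t) := a_{\bf n}\, e(\bf t \cdot \bb P(\bf n))$ of equal modulus. After interchanging the sum over $\iota$ with integration in $\bf t$, the identity $(\ast)$ applied to $\tilde a_{\bf n}(\bf t)$ converts the inner discrete sum into a $\frak p$-adic integral, to which the $\frak p$-adic discrete restriction inequality applies uniformly in $\bf t$; integrating the resulting bound over $\bf t$ and tidying the scaling factors recovers \eqref{bd:abyn}.

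For \eqref{bd:nbya}, I go in the reverse direction: by $(\ast)$, it suffices to bound $|S(\bf c_\iota)|^r$ pointwise by a constant multiple of $\dashint_{\text{cube}} |S|^r d\bf y$. Since $S$ is a trigonometric polynomial whose frequencies $\bb P(\bf n)$ satisfy $|\bb P_j(\bf n)| \leq N^{|e_j|}/\epsilon_j$ on $\Omega$, a Bernstein/Plancherel--Polya-type pointwise estimate controls $|S(\bf c_\iota)|^r$ by the $L^r$-average of $|S|^r$ over a subcube of natural scale $\sim \epsilon_j N^{-|e_j|}$ per direction. Passing from this subcube to the full cube of sidelength $N^{-|e_j|}$ inflates the average by a factor $\prod_j \epsilon_j^{-1}$, and careful bookkeeping of the Bernstein constant yields the prefactor $2^{(r+1)k}$. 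Summing over $\iota$ and invoking the real discrete restriction inequality then produces \eqref{bd:nbya}.

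The main obstacle I anticipate is pinning down the sharp constant $2^{(r+1)k}$ in the Bernstein step. The general strategy---controlling pointwise values of a bandlimited function by $L^r$-averages over boxes of critical size---is classical, but squeezing out the precise numerical constant requires a careful choice of the auxiliary subcube and of the pointwise-to-average comparison, possibly via an explicit Fourier expansion of $|S|^r$ for even integer $r$ followed by interpolation to general $r \geq 2$.
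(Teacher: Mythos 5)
Your proposal matches the paper's proof: the same change-of-variable identity (your $(\ast)$ is the paper's (2.1)), the same coefficient-modulation/averaging argument for \eqref{bd:abyn}, and the same ``bound point values of $S$ at the lattice of centers by local $L^r$-averages over cubes of the dual scale $\epsilon_j N^{-|e_j|}$, then enlarge to the full cube'' strategy for \eqref{bd:nbya}. The ``explicit Fourier expansion'' you anticipate for pinning down the constant is exactly what the paper does: it replaces $a_\bf n$ by $b_\bf n$ containing inverse cardinal-sine factors so that each point value $S(\bf c_\iota)$ becomes, literally, a convolution of the modulated sum against $1_{\frac{\epsilon}{2}\cal R}$, after which H\"older and the bound $\sin(x)/x\geq 2/\pi$ on $[-\pi/2,\pi/2]$ deliver $2^{(r+1)k}$.
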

    
    \begin{proof}
    We first establish the basic change-of-variable identity. Let $\{a_\bf n\}_{\bf n\in\Omega}$ be arbitrary. By local constancy considerations,
    \begin{equation*}
        \begin{split}
            &N^{\sum_j\sigma_j}\int_{B_{\Q_\frak p}(0,N^{-\sigma_1})\times\cdots\times B_{\Q_\frak p}(0,N^{-\sigma_k})}\left|\sum_{\bf n\in\Omega}a_{\bf n}\chi_\frak p\Big(\bf x\cdot\bb P\Big(\frac{\bf n}{N}\Big)\Big)\right|^rd\bf x\\
            &=N^{\sum_j(\sigma_j-|e_j|)}\sum_{\substack{(\iota_1,\ldots,\iota_k)\\0\leq\iota_j<N^{|e_j|-\sigma_j}}}\left|\sum_{\bf n\in\Omega}a_{\bf n}\chi_\frak p\Big(\sum_{j=1}^k\iota_j N^{\sigma_j-|e_j|}\sum_\ell c_j^\ell \bf n^{e_j^\ell}\Big)\right|^r.
        \end{split}
    \end{equation*}
    As the points of evaluation for the $\chi_\frak p$ are all rational, we may freely replace each $\chi_\frak p(\cdot)$ with $e(\cdot)$. Thus, we have the identity
    \begin{equation}\label{eq:cov}
        \begin{split}
            &N^{\sum_j\sigma_j}\int_{B_{\Q_\frak p}(0,N^{-\sigma_1})\times\cdots\times B_{\Q_\frak p}(0,N^{-\sigma_k})}\left|\sum_{\bf n\in\Omega}a_{\bf n}\chi_\frak p\Big(\bf x\cdot\bb P\Big(\frac{\bf n}{N}\Big)\Big)\right|^rd\bf x\\
            &=N^{\sum_j(\sigma_j-|e_j|)}\sum_{\substack{(\iota_1,\ldots,\iota_k)\\0\leq\iota_j<N^{|e_j|-\sigma_j}}}\left|\sum_{\bf n\in\Omega}a_{\bf n}e\Big(\sum_{j=1}^k\iota_j N^{\sigma_j-|e_j|}\sum_\ell c_j^\ell \bf n^{e_j^\ell}\Big)\right|^r.
        \end{split}
    \end{equation}
    Before proceeding to the two bounds \eqref{bd:abyn} and \eqref{bd:nbya}, we briefly explain the idea. The change-of-variable \eqref{eq:cov} equates the $\frak p$-adic short mean value integral to the sum of values of a real exponential sum, over the leftmost summand in the display of Definition \ref{def:subdoms}. Thus, we obtain a discrete version of the desired transference result. The two inequalities then follow by averaging and Fourier uncertainty.
    
    We now demonstrate \eqref{bd:abyn}. Write
    \begin{equation*}
        a_{\bf n}(v)=a_\bf ne\Big(\sum_{j=1}^kv_j\sum_\ell c_j^\ell\bf  n^{e_j^\ell}\Big),\quad v\in\R^k,
    \end{equation*}
    so that $a_{\bf n}(0)=a_{\bf n}$ and $|a_{\bf n}(v)|=|a_{\bf n}|$ for all $v$. Let $\cal R=\prod_{j=1}^k[-\frac{1}{2}N^{-|e_j|},\frac{1}{2}N^{-|e_j|}]$.
    Then we have
    \begin{equation*}
        \begin{split}
            &N^{\sum_j\sigma_j}\int_{A_\frak p^{(N,\sigma;\bb P)}}\left|\sum_{\bf n\in\Omega}a_{\bf n}e\big(\bf x\cdot\bb{P}(\bf n)\big)\right|^rd\bf x\\
            &=N^{\sum_j|e_j|}\int_{\cal R}N^{\sum_j(\sigma_j-|e_j|)}\sum_{\substack{(\iota_1,\ldots,\iota_k)\\0\leq\iota_j<N^{|e_j|-\sigma_j}}}\left|\sum_{\bf n\in\Omega}a_n(v)e\Big(\sum_{j=1}^k\iota_jN^{\sigma_j-|e_j|}\sum_\ell c_j^\ell\bf n^{e_j^\ell}\Big)\right|^rdv\\
            &=N^{\sum_j(\sigma_j+|e_j|)}\int_{\cal R}\int_{B_{\Q_\frak p}(0,N^{-\sigma_1})\times\cdots\times B_{\Q_\frak p}(0,N^{-\sigma_k})}\left|\sum_{\bf n\in\Omega}a_{\bf n}(v)\chi_\frak p\Big(\bf x\cdot\bb P\Big(\frac{\bf n}{N}\Big)\Big)\right|^rd\bf xdv.
        \end{split}
    \end{equation*}
    By the definition of $D_r^\frak p(N,\sigma;\bb P,\Omega)$ and the identity $|a_\bf n(v)|=|a_\bf n|$ for all $v\in\cal R$, we conclude
    \begin{equation*}
        N^{\sum_j\sigma_j}\int_{A_\frak p^{(N,\sigma;\bb P)}}\left|\sum_{\bf n\in\Omega}a_{\bf n}e\big(\bf x\cdot\bb P(\bf n)\big)\right|^rd\bf x\leq D_r^\frak p(N,\sigma;\bb P,\Omega)\sum_{\bf n\in\Omega}|a_{\bf n}|^r.
    \end{equation*}
    The inequality \eqref{bd:abyn} follows. We now consider \eqref{bd:nbya}. Picking up from the right-hand side of \eqref{eq:cov}, and adopting the abbreviation
    \begin{equation*}
        b_\bf n=a_\bf nN^{\sum_j|e_j|}\prod_j(\epsilon_j/2)^{-1}\prod_{j=1}^k\frac{\frac{\epsilon_j}{2}\pi\sum_{\ell}c_j^\ell(\bf n/N)^{e_j^\ell}}{\sin\Big(\frac{\epsilon_j}{2}\pi \sum_{\ell}c_j^\ell(\bf n/N)^{e_j^\ell}\Big)},\quad\bf n\in\Omega,
    \end{equation*}
    we evaluate
    \begin{equation*}
        \begin{split}
            &N^{\sum_j(\sigma_j-|e_j|)}\sum_{\substack{(\iota_1,\ldots,\iota_k)\\0\leq\iota_j<N^{|e_j|-\sigma_j}}}\left|\sum_{\bf n\in\Omega}a_\bf ne\Big(\sum_{j=1}^kN^{\sigma_j-|e_j|}\iota_j\sum_\ell c_j^\ell \bf n^{e_j^\ell}\Big)\right|^r\\
            &=N^{\sum_j(\sigma_j-|e_j|)}\sum_{\substack{(\iota_1,\ldots,\iota_k)\\0\leq\iota_j<N^{|e_j|-\sigma_j}}}\left|\sum_{\bf n\in\Omega}b_\bf ne\Big(\sum_{j=1}^k(\cdot)_j\sum_\ell c_j^\ell\bf n^{e_j^\ell}\Big)*1_{\frac{\epsilon}{2}\cal R}\Big(N^{\sigma_1-|e_1|}\iota_1,\ldots,N^{\sigma_k-|e_k|}\iota_k\Big)\right|^r.
        \end{split}
    \end{equation*}
    Here and on we write $\frac{\epsilon}{2}\cal R$ for the output of scaling the $j^{th}$ coordinate of $\cal R$ by $\frac{\epsilon_j}{2}$. By H\"older, the latter is bounded by
    \begin{equation*}
        (\prod_j\epsilon_j/2)^{r-1}N^{-(r-1)\sum_j|e_j|}\int_{\frac{\epsilon}{2}\cal R}N^{\sum_j(\sigma_j-|e_j|)}\sum_{\substack{(\iota_1,\ldots,\iota_k)\\0\leq\iota_j<N^{|e_j|-\sigma_j}}}\left|\sum_{\bf n\in\Omega}b_ne\Big(\sum_{j=1}^k\big(\iota_jN^{\sigma_j-|e_j|}-\bf x_j\big)\sum_\ell c_j^\ell \bf n^{e_j^\ell}\Big)\right|^rd\bf x.
    \end{equation*}
    We push the $r$ powers inside of the absolute values, and use $\epsilon_j/2<1$ to expand the domain; the last display is then bounded by
    \begin{equation*}
        (\prod_j\epsilon_j/2)^{-1}N^{\sum_j\sigma_j}\int_{A_\frak p^{(N,\sigma;\bb P)}}\left|\sum_{\bf n\in\Omega}(\prod_j\epsilon_j/2)N^{-\sum_j|e_j|}b_{\bf n}e\big(\bf x\cdot\bb{P}(\bf n)\big)\right|^rd\bf x.
    \end{equation*}
    Applying the definition of $D_r^\infty(N,\sigma;\bb P,\Omega)$, and using the smallness assumption on $\epsilon_j$ to conclude a lower bound on the cardinal sine, we conclude the upper bound
    \begin{equation*}
        \frac{2^{(r+1)k}}{\prod_{j=1}^{k}\epsilon_{j}}D_r^\infty(N,\sigma;\bb P,\Omega)\sum_{\bf n\in\Omega}|a_\bf n|^r.
    \end{equation*}
    Unrolling the equalities and bounds, we conclude \eqref{bd:nbya}.
    \end{proof}

    \begin{remark}
        In the special case of $r=2s$ for a positive integer $s$, the inequality \eqref{bd:abyn} can be obtained up to a constant loss by sum-product analysis and introducing technical weights; by operator interpolation, one concludes similar results for all $r\geq 2$. However, our method brings this constant down to $1$.
    \end{remark}

    We illustrate the domains with a few special cases.

    \begin{example}
        Let $\bb P:\R\to\R^2$ be $\bb P(x)=(x,x^2)$ and $\sigma=(0,\sigma_2)$. Then, for any choice of $\frak p$ and $N=\frak p^K$ for which $\sigma_2K\in\N$, the set $A_\frak p^{(N,\sigma;\bb P)}$ is a union of $N^{2-\sigma_2}$-many strips of shape $[0,1]\times[0,N^{-2}]$, equally spaced in the vertical interval $[0,1]$. See Figure \ref{fig:fig1}.

        \begin{figure}
            \centering
            \begin{subfigure}{.5\textwidth}
              \centering
              \includegraphics[width=.6\linewidth]{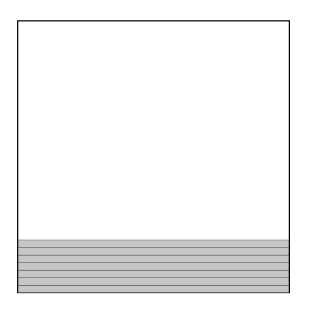}
              \caption{Short subdomain of $[0,1]^2$}
              \label{fig:fig1sub1}
            \end{subfigure}%
            \begin{subfigure}{.5\textwidth}
              \centering
              \includegraphics[width=.6\linewidth]{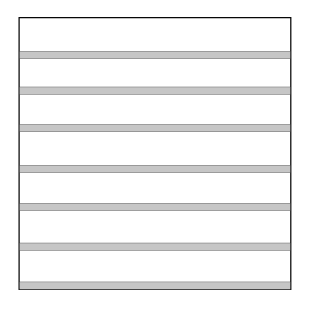}
              \caption{Sparse subdomain of $[0,1]^2$}
              \label{fig:fig1sub2}
            \end{subfigure}
            \caption{Domains for discrete restriction for the parabola in $\R^2$.}
            \label{fig:fig1}
        \end{figure}
        
    \end{example}

    \begin{example}
        Let $\bb P:\R\to\R^3$ be $\bb P(x)=(x,x^2,x^3)$ and $\sigma=(0,0,\sigma_3)$. Then, for any choice of $\frak p$ and $N=\frak p^K$ for which $\sigma_3K\in\N$, the set $A_\frak p^{(N,\sigma;\bb P)}$ is a union of $N^{3-\sigma_3}$-many plates of shape $[0,1]^2\times[0,N^{-3}]$, equally spaced in the vertical interval $[0,1]$. See Figure \ref{fig:fig2}.

        \begin{figure}
            \centering
            \begin{subfigure}{.5\textwidth}
              \centering
              \includegraphics[width=.6\linewidth]{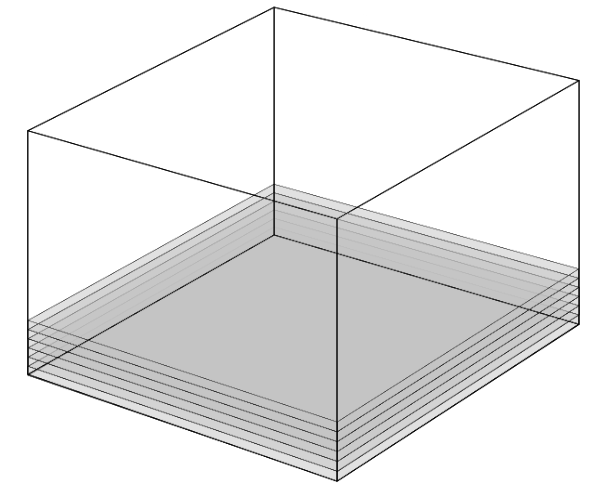}
              \caption{Short subdomain of $[0,1]^3$}
              \label{fig:fig2sub1}
            \end{subfigure}%
            \begin{subfigure}{.5\textwidth}
              \centering
              \includegraphics[width=.6\linewidth]{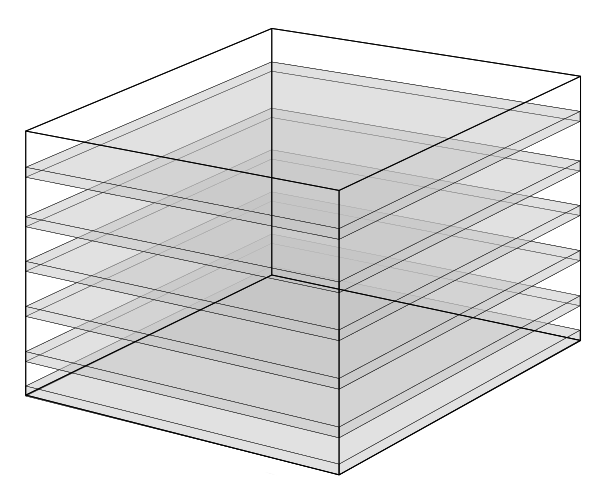}
              \caption{Sparse subdomain of $[0,1]^3$}
              \label{fig:fig2sub2}
            \end{subfigure}
            \caption{Domains for discrete restriction for the moment curve in $\R^3$.}
            \label{fig:fig2}
        \end{figure}

    \end{example}

    \begin{example}

        Let $\bb P:\R^2\to\R^3$ be $\bb P(x_1,x_2)=(x_1,x_2,x_1^2+x_2^2)$ and $\sigma=(0,\sigma_2,\sigma_3)$. Then, for any choice of $\frak p$ and $N=\frak p^K$ for which $\sigma_2K,\sigma_3K\in\N$, the set $A_\frak p^{(N,\sigma;\bb P)}$ is a union of planks of shape $[0,1]\times[0,N^{-1}]\times[0,N^{-2}]$, arranged in a grid whose $y$-displacement is $N^{\sigma_2-1}$ and whose $z$-displacement is $N^{\sigma_3-2}$, for a total of $N^{3-\sigma_2-\sigma_3}$ planks. See Figure \ref{fig:fig3}.

        \begin{figure}
            \centering
            \begin{subfigure}{.5\textwidth}
              \centering
              \includegraphics[width=.6\linewidth]{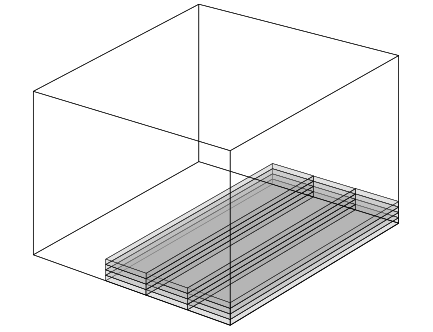}
              \caption{Short subdomain of $[0,1]^3$}
              \label{fig:fig3sub1}
            \end{subfigure}%
            \begin{subfigure}{.5\textwidth}
              \centering
              \includegraphics[width=.6\linewidth]{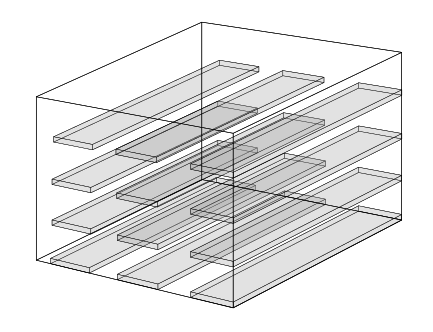}
              \caption{Sparse subdomain of $[0,1]^3$}
              \label{fig:fig3sub2}
            \end{subfigure}
            \caption{Domains for discrete restriction for the paraboloid in $\R^3$.}
            \label{fig:fig3}
        \end{figure}
        
    \end{example}

    In one of these examples, suitable small cap decoupling theorems are available to imply the mean value estimates in question.
    \begin{corollary}
        Let $\frak p$ be a prime and $N=\frak p^K$ for some $K$. Let $0\leq\sigma\leq 1$. Then, for any choice of $\{a_k\}_{0\leq k<N}$ complex and any exponent $r\geq 2$, we have
        \begin{equation*}
            N^{\sigma}\int_{A_\frak p^{(N,(0,\sigma);(x,x^2))}}\left|\sum_{n=0}^{N-1}a_ne(sn+tn^2)\right|^rdsdt\lesssim\bf p^{12}(\log N)^{22}(N^{\frac{r}{2}}+N^{r-4+\sigma})\sum_{n=0}^{N-1}|a_n|^r
        \end{equation*}
    \end{corollary}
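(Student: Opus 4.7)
The plan is to apply Theorem \ref{thm:transfer} to reduce the corollary to a $\frak{p}$-adic discrete restriction bound, and then to invoke small cap decoupling for the parabola over $\Q_\frak{p}$.

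\emph{Step 1 (Transference).} I would apply the inequality \eqref{bd:abyn} of Theorem \ref{thm:transfer} with $\bb{P}(x) = (x, x^2)$ (so $|e_1| = 1$, $|e_2| = 2$), with $\sigma = (0, \sigma)$ satisfying the hypothesis $\sigma_j \leq |e_j|$ since $0 \leq \sigma \leq 1$, and with $\Omega = \{0, 1, \ldots, N-1\}$. This reduces the corollary to showing
$$D_r^{\frak{p}}\bigl(N, (0, \sigma); (x, x^2), \Omega\bigr) \;\lesssim\; \bf{p}^{12}(\log N)^{22}\bigl(N^{r/2} + N^{r-4+\sigma}\bigr).$$

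\emph{Step 2 (Small cap decoupling over $\Q_\frak{p}$).} The remaining bound is exactly an instance of small cap decoupling for the parabolic arc $\{(n/N, (n/N)^2) : 0 \leq n < N\}$ over $\Q_\frak{p}^2$, where the localization parameter $\sigma$ plays the role of the vertical small cap scale. Over the reals, small cap decoupling for the parabola (e.g.\ Guth-Iosevich-Ou-Wang and its refinements) produces a decoupling constant of shape $C_\eps N^\eps(N^{r/2} + N^{r-4+\sigma})$, with the $N^{r/2}$ term corresponding to the canonical $L^2$-decoupling threshold and $N^{r-4+\sigma}$ to the flat extremizer. Over $\Q_\frak{p}$ the iterative Archimedean argument (say, a high-low decomposition adapted to $\frak{p}$-adic balls) carries over without change of substance, but the literal frequency localization of $\chi_\frak{p}$-plane waves eliminates the $N^\eps$ loss inherent to the real setting, replacing it with an explicit $\bf{p}^{12}(\log N)^{22}$ factor arising from the $O(\log_\frak{p} N)$ many dyadic scales traversed in the iteration and a $\frak{p}$-dependent constant at each stage.

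As a sanity check, the flat coefficients $a_n \equiv 1$ produce $|f(s,t)| \sim N$ on an $N^{-1}\times N^{-2}$ neighborhood of the origin, which lies entirely within the central strip of $A_\frak{p}^{(N,(0,\sigma);(x,x^2))}$, yielding
$$N^{\sigma}\int_{A_\frak{p}^{(N,(0,\sigma);(x,x^2))}}|f|^r \,\gtrsim\, N^{r-3+\sigma} \;=\; N^{r-4+\sigma}\sum_n|a_n|^r,$$
so the second term on the right-hand side is sharp for this example, while the first term subsumes the trivial single-Dirac example $a_{n_0}=1, a_n=0$ otherwise, for which $N^\sigma\int|f|^r = 1$.

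The main obstacle is the explicit numerology of the $\frak{p}$-adic decoupling constant: verifying that the polylog exponent is $22$ and the prime-power exponent is $12$ requires careful bookkeeping of every dyadic scale in the iteration and its attendant constant factor, together with an accurate identification of the small cap scale with $\sigma$ using the strip structure of $A_\frak{p}^{(N,(0,\sigma);(x,x^2))}$ from the parabola example above.
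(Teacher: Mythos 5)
Your approach matches the paper's: apply \eqref{bd:abyn} from Theorem \ref{thm:transfer} with $\bb P(x)=(x,x^2)$ to reduce the claim to bounding $D_r^{\frak p}(N,(0,\sigma);(x,x^2),\Omega)$, then invoke $\frak p$-adic small cap decoupling for the parabola. The paper's proof is exactly this one-line combination, citing Theorem 1 of \cite{JhighlowNA} for the $\frak p$-adic small cap decoupling input (which is where the explicit constant $\frak p^{12}(\log N)^{22}$ comes from) together with a routine translation into the discrete restriction normalization, so the only thing you leave open in Step 2 is the reference rather than a genuinely missing idea.
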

    \begin{proof}
        Combine Theorem 1 of \cite{JhighlowNA} and Theorem \ref{thm:transfer}, and an elementary argument.
    \end{proof}

    \begin{remark}\label{rmk:transfer}

    The general theme we observe is the following. Existing (real-number) small cap decoupling theorems imply good estimates for mean value estimates of exponential sums over the domains (a) in each of the above examples/figures. Decoupling proofs are often identical between real and $\frak p$-adic fields. Thus, one expects matching mean value estimates for the versions of the (a) domains in the $\frak p$-adic fields. By Theorem \ref{thm:transfer}, one concludes matching estimates for the ``sparse'' (b) domains. Thus, mean value estimates over the (a) domains \emph{morally} imply matching mean value estimates over the (b) domains.

    An important counterexample to the above theme is recorded below in Section \ref{section:obstruction}.
        
    \end{remark}

    We also point out that the transference principle also applies to the canonical-scale integrals, where the Archimedean and non-Archimedean shapes coincide; this amounts to choosing $\sigma=0$ in Theorem \ref{thm:transfer}. Thus:
    \begin{corollary}\label{cor:moment}
        Let $n\geq 2$ and $\frak p$ prime. Then, for any $N=\frak p^K$ and any choice of $\{a_k\}_{0\leq k<N}$ complex, and any exponent $r\geq 2$, we have
        \begin{equation*}
            \int_{[0,1]^n}\left|\sum_{k=0}^{N-1}a_ke(\bf x\cdot(t,t^2,\ldots,t^n))\right|^rd\bf x\leq\exp\Big(rC_n(\log\frak p)(\log N)^{1-\frac{1}{4n\log n+1}}\Big)\big(1+N^{\frac{r}{2}-n(n+1)}\big)\left(\sum_{k=0}^{N-1}|a_k|^2\right)^{r/2},
        \end{equation*}
        for some constant $C_n$ depending only on $n$.
    \end{corollary}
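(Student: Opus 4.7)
The plan is to combine the change-of-variable identity \ref{eq:cov} at $\sigma=0$ with the non-Archimedean $\ell^2$-decoupling theorem for the moment curve $\bb P(t)=(t,t^2,\ldots,t^n)$ over $\Q_\frak p$. First, I would observe that with $\sigma=0$ the sparse subdomain $A_\frak p^{(N,0;\bb P)}$ of Definition \ref{def:subdoms} degenerates to $[0,1]^n$ modulo $\Z^n$: since $|e_j|=j$, the translates $(\iota_1 N^{-1},\ldots,\iota_n N^{-n})$ indexed by $0\le\iota_j<N^j$, each thickened by $\prod_j[-\tfrac{1}{2} N^{-j},\tfrac{1}{2} N^{-j}]$, tile the torus exactly. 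Consequently the left-hand side of the corollary is precisely the integral appearing in the definition of $D_r^\infty(N,0;\bb P,\Omega)$ with $\Omega=\{0,\ldots,N-1\}$.

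However, I would not apply Theorem \ref{thm:transfer} as a black box: converting the $\ell^r$-type RHS $\sum|a_k|^r$ in the definition of $D_r^\frak p$ into the $\ell^2$-type RHS $(\sum|a_k|^2)^{r/2}$ demanded by the corollary would cost a power-mean factor of $N^{r/2-1}$, which is much too large. Instead I would reuse the change-of-variable identity \ref{eq:cov} inside the proof of that theorem. At $\sigma=0$ it reads
\begin{equation*}
\int_{[0,1]^n}|f|^r\,d\bf x \;=\; N^{\sum_j|e_j|}\int_{\cal R}\int_{B_{\Q_\frak p}(0,1)^n}\left|\sum_k a_k(v)\,\chi_\frak p\bigl(\bf x\cdot\bb P(k/N)\bigr)\right|^r d\bf x\,dv,
\end{equation*}
where $|a_k(v)|=|a_k|$ for every $v\in\cal R$, so the $\ell^2$ norm of the coefficient sequence is preserved uniformly in $v$. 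Then I would apply the $\ell^2$-decoupling theorem for the moment curve over $\Q_\frak p$ at the critical Vinogradov exponent, in its quantitative form with explicit subpolynomial constant $\exp(C_n(\log\frak p)(\log N)^{1-1/(4n\log n+1)})$ (the non-Archimedean counterpart of \cite{JhighlowNA} and analogous Bourgain--Demeter--Guth-style estimates). Monotonicity of $L^r$ norms on the probability space $B_{\Q_\frak p}(0,1)^n$ handles $r\le n(n+1)$, producing the $1$ term, and Hölder interpolation against the trivial bound $\|g_v\|_\infty\le\sum_k|a_k|\le N^{1/2}(\sum_k|a_k|^2)^{1/2}$ handles $r>n(n+1)$, producing the $N^{r/2-n(n+1)}$ term; raising the decoupling inequality to the $r$th power accounts for the factor of $r$ in the exponent of the subpolynomial constant.

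Integrating the resulting pointwise-in-$v$ bound over $\cal R$, whose Lebesgue measure is $N^{-\sum_j|e_j|}$, cancels the prefactor $N^{\sum_j|e_j|}$ in \ref{eq:cov} and delivers the inequality claimed. The main (and essentially only) nontrivial ingredient is the non-Archimedean moment-curve decoupling with its explicit subpolynomial constant; once that input is in hand, everything else is bookkeeping that pulls back cleanly through the change-of-variable because it touches only $|a_k|$ and not the phases.
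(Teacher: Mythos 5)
Your approach is essentially the paper's: the paper's one-line proof (``combine Theorem~\ref{thm:transfer} with the $\frak p$-adic moment-curve decoupling theorem'') can only sensibly mean running the change-of-variable identity \ref{eq:cov} at $\sigma=0$ and feeding in the $\ell^2$-decoupling bound pointwise in the $v$-variable, and that is exactly what you do. Your observation that the statement of Theorem~\ref{thm:transfer} cannot be used as a literal black box is a correct and worthwhile clarification: $D_r^\frak p$ and $D_r^\infty$ are defined with an $\ell^r$-type right-hand side $\sum_{\bf n}|a_{\bf n}|^r$, and naively converting the $\ell^2$-decoupling input to a bound on $D_r^\frak p$ and back would cost a factor of roughly $N^{r/2-1}$, which is fatal. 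The fix you give — observe that the proof of Theorem~\ref{thm:transfer} uses the coefficients $a_{\bf n}(v)$ only through $|a_{\bf n}(v)|=|a_{\bf n}|$, so the same change of variables transports \emph{any} $\ell^q\to L^r$ inequality, in particular the $\ell^2\to L^r$ one — is the right one, and is worth stating explicitly even though the paper does not.

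One arithmetic point to flag. When you H\"older-interpolate against $\|g_v\|_\infty\le N^{1/2}\big(\sum_k|a_k|^2\big)^{1/2}$ above the critical exponent $r_c=n(n+1)$, the supercritical factor you actually obtain is
\[
\int|g_v|^r\;\le\;\|g_v\|_\infty^{\,r-r_c}\int|g_v|^{r_c}\;\lesssim\;N^{(r-r_c)/2}\Big(\sum_k|a_k|^2\Big)^{r/2}\;=\;N^{\,r/2-n(n+1)/2}\Big(\sum_k|a_k|^2\Big)^{r/2},
\]
i.e.\ the exponent is $r/2-n(n+1)/2$, not $r/2-n(n+1)$ as you claim. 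The discrepancy is not yours so much as the corollary's: the displayed exponent $r/2-n(n+1)$ in Corollary~\ref{cor:moment} is inconsistent with \ref{conj:mom_small} and with the $d=1$ case of Corollary~\ref{cor:alg_sol}, both of which carry the standard supercritical exponent $n(n+1)/2$. You should therefore record $N^{r/2-n(n+1)/2}$ as the outcome of your interpolation rather than matching the (apparently mistyped) display.
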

    \begin{proof}
        Combine Theorem \ref{thm:transfer} with \cite{johnsrude2024restricted}, Theorem 6.1.
    \end{proof}
    \begin{remark}
        This result, with the exponential at the front replaced with $C_\eps N^\eps$, has been known since \cite{bourgain2016proof}. A much stronger result in the dimension $n=2$ was demonstrated by $\frak p$-adic methods in \cite{GLY}; note that the argument there relies on using an even integer exponent and a sum-product expansion. Here, our argument is appropriate for real $r\geq 2$.
    \end{remark}

    \section{Integrals arising from decoupling over completions of algebraic fields}\label{sec:numfield}

    We now consider consequences of the decoupling theorem for the moment curve over non-Archimedean completions of algebraic fields. Let $P(x)=x^d+c_{d-1}x^{d-1}+\ldots+c_0$ be a monic irreducible polynomial with rational coefficients, and let $\alpha\in\C$ be some root of $P$. Let $L=\Q(\alpha)$ and write $\L$ be some non-Archimedean completion of $L$. Then the decoupling theorem for the moment curve in $\L^d$ \cite{johnsrudethesis} implies that, for arbitrary complex coefficients $a_\bf n$ and $r\leq k(k+1)$,
    \begin{equation*}
        \begin{split}
            \int_{\O_\L^k}&\left|\sum_{\substack{\bf n=(n_0,\ldots,n_{d-1})\\0\leq n_\iota<N\,\forall\iota}} a_\bf ne\left(\mathrm{Tr}_{\L/\Q}\Big[\bf x\cdot\Big\{\big((n_0/N)+(n_1/N)\alpha+\ldots+(n_{d-1}/N)\alpha^{d-1}\big)^j\Big\}_{j=1}^k\Big]\right)\right|^rd\bf x\\
            &\leq \exp\left(rC_{k,\alpha}(\log N)^{1-c_k}\right)\left(\sum_{\substack{\bf n=(n_0,\ldots,n_{d-1})\\0\leq n_\iota<N\,\forall\iota}} |a_\bf n|^2\right)^{r/2}.
        \end{split}
    \end{equation*}
    Note carefully that $e\circ\mathrm{Tr}_{\L/\Q}$ is a fundamental character for the field $\L$; note too that $\mathrm{Tr}_{\L/\Q}$ is $\Q$-linear. The left-hand side may be written with multinomial coefficients as
    \begin{equation*}
        N^{-\frac{dk(k+1)}{2}}\sum_{\substack{\bf w=(w_{j\ell})\\0\leq w_{j\ell}<N^j}}\left|\sum_{\substack{\bf n=(n_0,\ldots,n_{d-1})\\0\leq n_\iota<N\,\forall\iota}} a_\bf ne\left(\sum_{j=1}^k\sum_{\ell=0}^{d-1}\frac{w_{j\ell}}{N^j}\sum_{\substack{e_0+\ldots+e_{d-1}=j\\e_\iota\geq 0\,\forall\iota}}\binom{j}{e_0,\ldots,e_{d-1}}n_0^{e_0}\cdots n_{d-1}^{e_{d-1}}\mathrm{Tr}_{L/\Q}\big[\alpha^{\ell+\sum_{\iota=1}^{d-1}\iota e_\iota}\big]\right)\right|^r.
    \end{equation*}
    By following the same arguments in Theorem \ref{thm:transfer}, we obtain:
    \begin{theorem}\label{thm:mv_nf}
        Let $\alpha\in\overline{\Q}^{\mathrm{alg}}$ be algebraic of degree $d$, $L=\Q(\alpha)$, $k\in\N$, and $2\leq r\leq k(k+1)$. Then we have the mean value estimate
        \begin{equation}\label{ineq:mv_nf}
        \begin{split}
            \int_{[0,1]^{dk}}&\left|\sum_{\substack{\bf n=(n_0,\ldots,n_{d-1})\\0\leq n_\iota<N\,\forall\iota}} a_\bf ne\left(\sum_{j=1}^k\sum_{\ell=0}^{d-1}\bf x_{j\ell}\sum_{\substack{e_0+\ldots+e_{d-1}=j\\e_\iota\geq 0\,\forall\iota}}\mathrm{Tr}_{L/\Q}\big[\alpha^{\ell+\sum_{\iota=1}^{d-1}\iota e_\iota}\big]\binom{j}{e_0,\ldots,e_{d-1}}n_0^{e_0}\cdots n_{d-1}^{e_{d-1}}\right)\right|^rd\bf x\\
            &\leq \exp\left(rC_k(d\log N)^{1-c_k}\right)\left(\sum_{\substack{\bf n=(n_0,\ldots,n_{d-1})\\0\leq n_\iota<N\,\forall\iota}} |a_\bf n|^2\right)^{r/2}.
        \end{split}
    \end{equation}
    \end{theorem}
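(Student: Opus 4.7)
The plan is to deduce \ref{ineq:mv_nf} directly from the non-Archimedean moment-curve decoupling inequality displayed just above the theorem statement, using the same local-constancy and change-of-variable mechanism as Theorem \ref{thm:transfer} specialized to the canonical scale $\sigma=0$. The multinomial rewriting already displayed converts the $\O_\L^k$ integral into a normalized finite sum over lattice points $\bf w=(w_{j\ell})$ with $0\leq w_{j\ell}<N^j$; the normalization factor $N^{-dk(k+1)/2}$ comes from local constancy of the integrand at scale $\frak p^{jK}$ in the $j$th $\L$-coordinate, which in turn decomposes into $d$ $\frak p$-adic coordinates via a $\Z_\frak p$-basis of $\O_\L$ (one first selects a rational prime $\frak p$ together with a non-Archimedean completion $\L/\Q_\frak p$ of degree exactly $d$). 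The resulting phase is precisely the trace-weighted polynomial in \ref{ineq:mv_nf}, because the fundamental character $e\circ\mathrm{Tr}_{\L/\Q_\frak p}$ restricts to $e\circ\mathrm{Tr}_{L/\Q}$ on $L$-valued inputs and $\mathrm{Tr}_{L/\Q}$ is $\Q$-linear on the multinomial expansion of $((n_0+n_1\alpha+\ldots+n_{d-1}\alpha^{d-1})/N)^j$.

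Next I would follow the argument of \ref{bd:abyn} in Theorem \ref{thm:transfer}: for $\bf x\in\prod_{j,\ell}[w_{j\ell}/N^j,(w_{j\ell}+1)/N^j]$ write $\bf x_{j\ell}=w_{j\ell}/N^j+v_{j\ell}$, so that the real exponential sum at $\bf x$ differs from the one at the rational point $(w_{j\ell}/N^j)$ only by replacement of each $a_\bf n$ by a twist $a_\bf n(\bf v)$ with $|a_\bf n(\bf v)|=|a_\bf n|$. Averaging $\bf v$ over the box of volume $N^{-dk(k+1)/2}$ reassembles the finite sum into the real integral over $[0,1]^{dk}$, since these boxes exactly tile the cube. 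Applying the non-Archimedean decoupling bound then yields \ref{ineq:mv_nf}. The factor $d$ in the exponent $(d\log N)^{1-c_k}$ arises because the effective scale parameter of the moment-curve decoupling inside $\L^k$, when measured over $\Q_\frak p$, is $|\O_\L/\frak p^K\O_\L|=N^d$; correspondingly, the $\alpha$-dependence of the constant $C_{k,\alpha}$ collapses to an absolute $C_k$.

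The hardest step is arranging the structural choice of $\frak p$ and a completion $\L$ of degree exactly $d$: for $L/\Q$ whose Galois closure contains no element of order $d$ no such inert prime exists, and one must instead work with the \'etale $\Q_\frak p$-algebra $L\otimes_\Q\Q_\frak p\cong\prod_{\frak P\mid\frak p}L_\frak P$, applying the non-Archimedean moment-curve decoupling coordinatewise to each field factor and combining by triangle inequalities (absorbing the resulting constants into $C_k$). The remaining steps are bookkeeping: verifying the trace-character compatibility above, and tracking the multinomial combinatorics to confirm that only the index $\ell+\sum_{\iota=1}^{d-1}\iota e_\iota$ appears inside the trace with multiplicity $\binom{j}{e_0,\ldots,e_{d-1}}$. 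No new decoupling input beyond the stated $\L$-valued moment-curve estimate is required; the theorem is a formal transference in the spirit of Section \ref{sec:padic}.
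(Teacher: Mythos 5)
Your proposal follows exactly the route the paper takes: instantiate the cited non-Archimedean moment-curve decoupling/discrete-restriction estimate over a completion $\L$ of $L$, expand by local constancy into the lattice sum (which the paper records via the multinomial rewriting), and run the $\sigma=0$ specialization of the Theorem \ref{thm:transfer} averaging to pass to $[0,1]^{dk}$. The paper dispatches this with the single sentence ``By following the same arguments in Theorem \ref{thm:transfer}, we obtain,'' so your write-up is a faithful expansion of what is left implicit.

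You have, however, noticed a genuine subtlety that the paper passes over silently: for the $N^d$ tuples $\bf n$ to index $N^d$ \emph{distinct} caps in $\O_\L/\frak p^K\O_\L$, one needs $1,\alpha,\ldots,\alpha^{d-1}$ to stay $\Q_\frak p$-linearly independent in $\L$, i.e. $[\L:\Q_\frak p]=d$, equivalently that the minimal polynomial of $\alpha$ stay irreducible over $\Q_\frak p$. This is not always available: for $L=\Q(\sqrt{5},\sqrt{13},\sqrt{17})$, Galois with group $(\Z/2)^3$ and tamely ramified at every prime, every decomposition group is metacyclic and hence of order at most $4<8$, so every rational prime has at least two places above it. Your \'etale-algebra patch $L\otimes_\Q\Q_\frak p\cong\prod_\frak P L_\frak P$ is the right remedy, but ``combining by triangle inequalities'' undersells the step: one iterates the single-field $\ell^2L^r$ moment-curve decoupling over the factors, interchanging the $\ell^2$ sum over caps in one factor with the $L^r$ integral over the remaining factors via Fubini and Minkowski (this is where $r\geq 2$ enters), so that the per-factor constants multiply and the caps recombine along $\O_L/\frak p^K\O_L\cong\prod_\frak P \O_{L_\frak P}/\frak p^K\O_{L_\frak P}$. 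Finally, your criterion for the existence of an inert prime --- an element of order $d$ in the Galois closure's group --- is only accurate when $L/\Q$ is Galois; in general one wants an element of that group acting transitively on $G/H$ where $H$ fixes $L$, and totally ramified primes furnish a separate source of single places. With those refinements your outline is sound and matches the paper's intended argument.
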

    
    We supply several special examples.
    \begin{example}
        Let $P(x)=x^2+1$, $k\geq 1$, and $r=k(k+1)$. Then
        \begin{equation*}
        \begin{split}
            \int_{[0,1]^{2k}}&\left|\sum_{0\leq n_0,n_1<N} a_\bf ne\left(\sum_{j=1}^k\sum_{\ell=0}^{1}\bf x_{j\ell}\sum_{e_1=0}^j\epsilon_{\ell,e_1}\binom{j}{e_1}n_0^{j-e_1}n_{1}^{e_1}\right)\right|^{k(k+1)}d\bf x\\
            &\leq \exp\left(C_k'(\log N)^{1-c_k}\right)\left(\sum_{0\leq n_0,n_1<N} |a_\bf n|^2\right)^{k(k+1)/2},
        \end{split}
    \end{equation*}
    where
    \begin{equation*}
        \epsilon_{\ell,e_1}=\begin{cases}
            0 & \ell+e_1\,\,\,\text{odd},\\
            -1 & \ell+e_1\equiv 2\,\,\text{mod 4},\\
            1 & \ell+e_1\equiv 0\,\,\text{mod 4}.
        \end{cases}
    \end{equation*}
    In the case $k=3$, this takes the form
    \begin{equation*}
        \begin{split}
            \int_{[0,1]^{6}}&\left|\sum_{0\leq n_0,n_1<N} a_\bf ne\left(n_0\bf x_{10}-n_1\bf x_{11}+(n_0^2-n_1^2)\bf x_{20}-2n_0n_1\bf x_{21}+(n_0^3-n_0n_1^2)\bf x_{30}-(n_0^2n_1-n_1^3)\bf x_{31}\right)\right|^{12}d\bf x\\
            &\leq \exp\left(C(\log N)^{1-c}\right)\left(\sum_{0\leq n_0,n_1<N} |a_\bf n|^2\right)^{6},
        \end{split}
    \end{equation*}
    
    Returning to the general $k$ case, and specializing to the case that $a_{\bf n}=0$ unless $n_1=0$, we recover exactly the Bourgain--Demeter--Guth result on the main conjecture of the Vinogradov mean value theorem at the critical exponent.
    \end{example}
    \begin{example}
        Let $P(x)=x^3-2$ and $k=3$. Then
        \begin{equation*}
            \begin{split}
                \int_{[0,1]^{9}}&\left|\sum_{0\leq n_0,n_1,n_2<N} a_\bf ne\left(\bf x\cdot\Psi(\bf n)\right)\right|^{12}d\bf x\leq \exp\left(C(\log N)^{1-c}\right)\left(\sum_{0\leq n_0,n_1,n_2<N} |a_\bf n|^2\right)^{6},
            \end{split}
        \end{equation*}
        where the phase function $\Psi$ is given by
        \begin{equation*}
            \begin{split}\Psi(n_0,n_1,n_2)=\Big(&n_0,\,\,n_1,\,\,n_2,\\
            &n_0^2+4n_1n_2,\,\,n_1^2+2n_0n_2,\,\,n_2^2+n_0n_1,\\
            &4n_2^3+2n_1^3+12n_0n_1n_2+n_0^3,\,\,
            2n_1n_2^2+n_0n_1^2+n_0^2n_2,\,\,2n_1^2n_2+2n_0n_2^2+n_0^2n_1\Big).
            \end{split}
        \end{equation*}
    \end{example}
    \begin{remark}
        In the quadratic examples of Theorem \ref{thm:mv_nf}, i.e. where $k=2$, we are essentially identifying a family of graphs of tuples of quadratic forms which exhibit $\delta^{-\eps}$-fine decoupling up to $\ell^2L^6$; one may compare with the systematic analysis of decoupling over quadratic forms in \cite{GZZ}.
    \end{remark}
    
    We conclude by demonstrating a simple consequence for Vinogradov systems of algebraic numbers.
    \begin{corollary}\label{cor:alg_sol}
        Let $\alpha\in\overline{\Q}^{\mathrm{alg}}$ be algebraic of degree $d\geq 1$. Let $J_{s,k,d}(N;\alpha)$ be the number of solutions to the simultaneous system of $k$ equations
        \begin{equation*}
            \begin{split}
            \sum_{j=1}^s\sum_{\ell=0}^{d-1} n_{j\ell}\alpha^\ell&=\sum_{j=1}^s\sum_{\ell=0}^{d-1} m_{j\ell}\alpha^\ell\\
            \sum_{j=1}^s\Big(\sum_{\ell=0}^{d-1} n_{j\ell}\alpha^\ell\Big)^2&=\sum_{j=1}^s\Big(\sum_{\ell=0}^{d-1} m_{j\ell}\alpha^\ell\Big)^2\\
            &\,\,\,\vdots\\
            \sum_{j=1}^s\Big(\sum_{\ell=0}^{d-1} n_{j\ell}\alpha^\ell\Big)^k&=\sum_{j=1}^s\Big(\sum_{\ell=0}^{d-1} m_{j\ell}\alpha^\ell\Big)^k,
            \end{split}
        \end{equation*}
        among choices of $n_{j\ell},m_{j\ell}\in[0,N)$, with $1\leq j\leq s$ and $0\leq\ell\leq d-1$. Then we have the bound
        \begin{equation*}
            J_{s,k,d}(N;\alpha)\lesssim_{\eps,\alpha} N^\eps(N^{ds}+N^{2ds-dk(k+1)/2}).
        \end{equation*}
    \end{corollary}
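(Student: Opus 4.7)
The plan is to express $J_{s,k,d}(N;\alpha)$ as an $L^{2s}$ integral of the exponential sum appearing in Theorem \ref{thm:mv_nf}, and then apply that theorem either directly or after factoring against the trivial $L^\infty$ bound, depending on whether $2s$ falls below or above the critical exponent $k(k+1)$.

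First, I would translate the system of $k$ equations over $L=\Q(\alpha)$ into a system of $dk$ equations over $\Q$. Setting $\beta_{\bf n}:=n_0+n_1\alpha+\cdots+n_{d-1}\alpha^{d-1}$, the $\kappa$th equation in the statement reads $\sum_{j=1}^s\beta_{\bf n_j}^\kappa=\sum_{j=1}^s\beta_{\bf m_j}^\kappa$ in $L$. Because the trace pairing on the separable extension $L/\Q$ is non-degenerate and $\{1,\alpha,\ldots,\alpha^{d-1}\}$ is a $\Q$-basis, an element $\gamma\in L$ vanishes if and only if $\mathrm{Tr}_{L/\Q}[\alpha^\ell\gamma]=0$ for every $\ell=0,\ldots,d-1$. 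Applying this to $\gamma=\sum_j(\beta_{\bf n_j}^\kappa-\beta_{\bf m_j}^\kappa)$ and expanding $\alpha^\ell\beta_{\bf n}^\kappa$ via the multinomial theorem produces exactly the $dk$-many $\Q$-linear expressions that constitute the phase of Theorem \ref{thm:mv_nf}.

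By orthogonality of characters on $[0,1]^{dk}$, I then obtain $J_{s,k,d}(N;\alpha)=\int_{[0,1]^{dk}}|F(\bf x)|^{2s}\,d\bf x$, where $F$ is the exponential sum of Theorem \ref{thm:mv_nf} with $a_\bf n=1$ for all $\bf n$. Note that $\sum_\bf n|a_\bf n|^2=N^d$ and $|F(\bf x)|\leq N^d$ pointwise. When $2s\leq k(k+1)$, Theorem \ref{thm:mv_nf} applied with $r=2s$ gives $J_{s,k,d}(N;\alpha)\leq\exp(2sC_k(d\log N)^{1-c_k})N^{ds}\lesssim_{\eps,\alpha}N^\eps N^{ds}$, since the subpolynomial prefactor is absorbed into $N^\eps$. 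When $2s>k(k+1)$, I would factor $|F|^{2s}=|F|^{k(k+1)}|F|^{2s-k(k+1)}$, bound the last factor pointwise by $N^{d(2s-k(k+1))}$, and apply Theorem \ref{thm:mv_nf} at the critical exponent $r=k(k+1)$ to the remaining integral; this yields $J_{s,k,d}(N;\alpha)\lesssim_{\eps,\alpha}N^\eps N^{d(2s-k(k+1))}N^{dk(k+1)/2}=N^\eps N^{2ds-dk(k+1)/2}$. Together these two regimes cover the claimed bound.

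I do not anticipate any substantive obstacle: the analytic content is entirely contained in Theorem \ref{thm:mv_nf}. The only care needed is the algebraic step of verifying that the $dk$ trace equations line up term-by-term with the phase of that theorem, which is automatic once the multinomial expansions of $\alpha^\ell\beta_{\bf n}^\kappa$ are written out, together with the trivial observation that subpolynomial growth in $N$ absorbs into the $\eps$ slack.
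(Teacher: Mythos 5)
Your proposal is correct and follows essentially the same route as the paper: apply Theorem~\ref{thm:mv_nf} with $r=2s$ and $a_{\bf n}\equiv 1$, interpret the integral via orthogonality as a solution count for the system of $dk$ trace equations, and observe that those equations are implied by (indeed, by non-degeneracy of the trace form on $L/\Q$, equivalent to) the $k$ equations over $L$. You spell out two points the paper's one-line proof glosses over — the trace-pairing argument that identifies the two systems, and the split into the subcritical case $2s\le k(k+1)$ versus the supercritical case handled by pulling out an $L^\infty$ factor, which is needed since Theorem~\ref{thm:mv_nf} is only stated for $r\le k(k+1)$ — and both elaborations are accurate; the one small caveat shared by you and the paper is that when $\alpha$ is not an algebraic integer the traces need not be integers, so the orthogonality identity holds only after clearing denominators, costing a factor depending on $\alpha$ that is harmlessly absorbed into $\lesssim_{\eps,\alpha}$.
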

    \begin{proof}
        Applying Theorem \ref{thm:mv_nf} with $r=2s$ and $a_\bf n\equiv 1$, we obtain the same upper bound to the system with the function $\operatorname{Tr}_{\Q(\alpha)/\Q}$ applied to both sides. Passing to the original system above reduces the number of relations, so we conclude the desired upper bound.
    \end{proof}

    \begin{remark}
        The case $d=1$ is just the Bourgain--Demeter--Guth result. For $d\geq 1$ arbitrary and $\alpha$ transcendental, the same upper bound holds for $J_{s,k,d}(N;\alpha)$ subject to the natural interpretation of the latter quantity; indeed, passage from $\alpha$ to some $\alpha'\in\overline{\Q}^{\mathrm{alg}}$ of degree $d$ introduces relations, hence more solutions.
    \end{remark}

    \section{An obstruction for transference arguments}\label{section:obstruction}

    In this section, we record an example which runs contrary to the intuition described in Remark \ref{rmk:transfer}. As small cap decoupling theorems imply short mean value estimates, and as Theorem \ref{thm:transfer} implies that $\frak p$-adic short mean value estimates transfer to mean value estimates over sparse real domains, any such example would either need to (a) exploit the gap between the two estimates \eqref{bd:abyn} and \eqref{bd:nbya} in Theorem \ref{thm:transfer}, or (b) demonstrate a quantitative gap between real and $\frak p$-adic decoupling theorems. We take the latter approach.
    
    Let $\frak p=4\ell+1$ be prime. It is well-known that $\Q_\frak p$ has a square root of $-1$, i.e. there exists a sequence $b_0,b_1,\ldots\in\{0,\ldots,\frak p-1\}$ such that $\xi=\sum_{n=0}^\infty b_n\frak p^n$ satisfies $\xi^2=-1$. It follows that the decoupling theory of the $(2+1)$ paraboloid over the $\frak p$-adics is significantly different from that over the reals.

    To this end, write $\bb P(a,b)=(a,b,a^2+b^2)$. Let $k\geq 1$ and write $N=\frak p^k$. For $0\leq n<N$, we write
    \begin{equation*}
        f_n(\bf x)=1_{\frak p^{-2k}\Z_p^3}(\bf x)\chi_\frak p(\bf x\cdot (n\xi,n,0)).
    \end{equation*}
    Then $\hat{f}_n$ is supported in the $N^{-2}$ ball centered at $(n\xi,n,0)$ on the $(2+1)$ paraboloid over $\Z_\frak p^2$. For $r\geq 2$, we have
    \begin{equation*}
        \|f_n\|_{r}=N^{\frac{6}{r}},\quad\left\|\sum_{n=0}^{N-1}f_n\right\|_{r}\gtrsim N^{1+\frac{5}{r}},
    \end{equation*}
    so that
    \begin{equation*}
        \frac{\left\|\sum_{n=0}^{N-1}f_n\right\|_{r}}{\left(\sum_{n=0}^{N-1}\|f_n\|_r^2\right)^{1/2}}\gtrsim N^{\frac{1}{2}-\frac{1}{r}}.
    \end{equation*}
    Thus, if $\mathrm{Dec}_{\ell^2L^r}^{\Q_{\frak p}}(\delta,\bb P)$ is the decoupling constant associated to the standard partition of the $\delta^2$ neighborhood of the truncated paraboloid $\bb P$ over $\Q_\frak p$, then we have
    \begin{equation*}
        \mathrm{Dec}_{\ell^2L^r}^{\Q_\frak p}(\delta,\bb P)\gtrsim\delta^{-\frac{1}{2}+\frac{1}{r}}.
    \end{equation*}
    In contrast, the Bourgain--Demeter theorem for the paraboloid supplies the bound for the \emph{real} decoupling constant
    \begin{equation*}
        \mathrm{Dec}_{\ell^2L^r}^\R(\delta,\bb P)\lesssim_\eps\delta^{\eps},\quad\forall\eps>0,
    \end{equation*}
    in the range $2\leq r\leq 4$.

    We are not aware of a corresponding example for discrete restriction constants. Note that the canonical-scale discrete restriction constants always coincide between the reals and $\frak p$-adics, so such an example would have to come from short/sparse integrals. As these are proved by small cap decoupling theorems, one would need to leverage some gap between small cap decoupling theorems of the two settings. On the other hand, most small cap decoupling theorems are proven in the $\ell^pL^p$ context, where we do not anticipate any difference in the decoupling theory. The only examples the author is aware of in $\ell^qL^p$ small cap decoupling with $q<p$ are for special cases of the moment curve, where no major differences between the reals and the $\frak p$-adics should appear; see \cite{FGM,johnsrude2023small,JhighlowNA}.

    \begin{problem}
        Find a choice of $\frak p$, $\bb P$, $q$, $r$, and $\sigma$ such that the $\ell^q\to L^r$ discrete restriction constant for $\Omega=\{0,\ldots,N\}^d$ over the domain $[0,N^{-\sigma_1}]\times\cdots\times[0,N^{-\sigma_k}]$ obeys a different polynomial dependence on $N$ than the corresponding constant for the domain $A_{\frak p}^{(N,\sigma;\bb P)}$.
    \end{problem}

    \begin{remark}
        The isotropy of the dot product suggests that $\frak p$-adic paraboloids should be treated as hyperbolic paraboloids. Moreover, in dimensions greater than 3, many more $\frak p$-adic fields have isotropy. It is interesting to note, in view of this phenomenon, that the moment curve exhibits no such breakdown in decoupling theory; see \cite{johnsrude2024restricted}, Theorem 6.1. The critical demonstration of this fact is the latter's Lemma 6.11, which demonstrates that projections of the moment curve still have curvature. See also that paper's Proposition 6.13.
    \end{remark}

	\printbibliography
	
\end{document}